    \newtheorem{theorem}{Theorem}
    \def\01{\{0,1\}}
    \newcommand{\Tr}{\mbox{\rm Tr}}
    \newcommand{\norm}[1]{\mbox{$\|{#1}\|$}}
    \newcommand{\bx}{\boldsymbol{x}}
    \newcommand{\bA}{\boldsymbol{A}}
    \newcommand{\bg}{\boldsymbol{g}}
    \newcommand{\boldr}{\boldsymbol{r}}
    \newcommand{\by}{\boldsymbol{y}}
    \newcommand{\bz}{\boldsymbol{z}}
    \newcommand{\bb}{\boldsymbol{b}}
    \newcommand{\bW}{\boldsymbol{W}}
    \newcommand{\bn}{\boldsymbol{b}}
    \newcommand{\bv}{\boldsymbol{v}}
    \def\H{\mathcal{H}}
    \newcommand{\G}{\mathcal{G}}
    \newcommand{\Sh}{\mathcal{T}}
     \newcommand{\V}{{\mathcal{V}}}
     \newcommand{\De}{{\mathcal{D}}}
     \newcommand{\R}{{\mathbb{R}}}
     \newcommand{\id}{{\mathbb{I}}}
    \newtheorem{fact}[theorem]{Fact}
    \newtheorem{lemma}[theorem]{Lemma}
    \newtheorem{claim}[theorem]{Claim}
    \def\01{\{0,1\}}
    \newcommand{\GSA}{\mathsf{GSA}}
    \newcommand{\reals}{{\mathbb R}}
    \newcommand {\br} [1] {{\left(#1\right)}}
     \newcommand {\set} [1] {{ \left\lbrace #1 \right\rbrace }}
    \newcommand {\Br} [1] {{ \big[ #1 \big] }}
\begin{document}

\title{On the Gaussian surface area of spectrahedra}

\author{Srinivasan Arunachalam\thanks{IBM T.J. Watson Research Center \textsf{Srinivasan.Arunachalam@ibm.com}} \and
Oded Regev\thanks{Courant Institute of Mathematical Sciences, New York University, \textsf{regev@cims.nyu.edu}}
 \quad \and
Penghui Yao\thanks{State Key Laboratory for Novel Software Technology, Nanjing
    University, \textsf{pyao@nju.edu.cn}}\vspace{1mm}
 \and
}

\maketitle

\begin{abstract}
We show that for sufficiently large $n\geq 1$ and $d=C n^{3/4}$ for some universal constant $C>0$, a random spectrahedron with matrices drawn from Gaussian orthogonal ensemble has Gaussian surface area $\Theta(n^{1/8})$ with high~probability.
\end{abstract}

\section{Introduction}
A \emph{spectrahedron} $S\subseteq\reals^n$ is a set of the form
$$
S=\set{x\in\reals^n:\sum_ix_iA^{(i)}\preceq B} \; ,
$$
for some $d\times d$ symmetric matrices
 $A^{(1)},\ldots, A^{(n)}, B\in \mathrm{Sym}_d$.
Here we will be concerned with the \emph{Gaussian surface area} of $S$, defined as
\begin{align}\label{eq:gsaoriginaldefinitionwithouter}
\GSA\br{S}=\liminf_{\delta\rightarrow 0}\frac{\G^n\br{S_{\delta}^{\mathrm{out}}}}{\delta} \; ,
\end{align}
where $S_{\delta}^{\mathrm{out}}=\set{x \notin S:\mathrm{dist}(x,S)\leq\delta}$ denotes the outer $\delta$-neighborhood of $S$ under Euclidean distance  
and $\G^n(\cdot)$ denotes the standard Gaussian measure on $\mathbb{R}^n$ whose density is $(2\pi)^{-n/2} \exp(-\|x\|^2/2)$.
Ball showed that the $\GSA$ of any convex body in $\mathbb{R}^n$ is $O(n^{1/4})$~\cite{ball1993reverse}, which was later shown to be tight by Nazarov~\cite{nazarov2003maximal}. Moreover,  Nazarov~\cite{klivans2008learning} showed that the $\GSA$ of a $d$-facet polytope\footnote{A $d$-facet polytope is the special case of a spectrahedron when the matrices, 
$A^{(1)},\ldots,A^{(n)},B$ are \emph{diagonal}.} in $\reals^n$ is $O(\sqrt{\log d})$ and this fact has found application in learning theory and constructing pseudorandom generators for polytopes~\cite{klivans2008learning,harsha2013invariance,servedio2017fooling,chattopadhyay2019simple}. We refer the interested reader to~\cite{klivans2008learning,harsha2013invariance} for more details.  Motivated by recent work~\cite{arunachalam2021positive}, this raises the question of whether the $\GSA$ of spectrahedra is also small. In this note we 
answer this question in the negative.
Recall that a matrix $\bA$ drawn from the Gaussian orthogonal ensemble is a symmetric matrix whose entries $\set{\bA_{i,j}}_{i\leq j}$ are all independent normal random variables of mean $0$ having variance $1$ if $i<j$ and variance $2$ if $i=j$. 

\begin{theorem}
\label{thm:gsa}
For a universal constant $C>0$ and any integers $n,d \ge 1$ satisfying $d\leq n/C$ the following hold. 
If  $\bA^{(1)},\ldots,\bA^{(n)}$ are i.i.d.~drawn from the $d\times d$ Gaussian orthogonal ensemble,  then the  spectrahedron
\begin{equation}\label{eq:main theorem spectrahedron}
\Sh=\Big\{x\in \R^n:\sum_i x_i \bA^{(i)}\preceq 2 \sqrt{nd}\cdot\id \Big\}
\end{equation}
satisfies $\GSA(\Sh)\geq c\cdot\sqrt{n/d}$ for some absolute constant $c>0$ with probability at least $1-C \exp(-d n^{-3/4} / C)$. 
Moreover, for any integer $d$ satisfying $d\leq n/C$,
$\GSA(\Sh)\leq2\sqrt{n}/(\sqrt{\pi d})$
holds with probability at least $1 - \exp(-n/50)$.
\end{theorem}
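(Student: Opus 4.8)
\smallskip\noindent\textbf{Proof plan for the upper bound.}\quad
The plan is to trap the outer $\delta$-neighbourhood $\Sh_{\delta}^{\mathrm{out}}$ inside a thin spectrahedral shell and thereby reduce $\GSA(\Sh)$ to the value at a single point of the density of a one-dimensional random variable. For a unit vector $v\in\R^{d}$ write $w(v)=(v^{\top}\bA^{(i)}v)_{i\in[n]}\in\R^{n}$, and put
\[
L\ :=\ \sup_{\|v\|_2=1}\|w(v)\|_2\ =\ \sup_{\|a\|_2=1}\lambda_{\max}\!\left(\sum_i a_i\bA^{(i)}\right).
\]
If $x\notin\Sh$, let $u$ be a unit top eigenvector of $M(x):=\sum_i x_i\bA^{(i)}$; then $\langle w(u),x\rangle=u^{\top}M(x)u=\lambda_{\max}(M(x))>2\sqrt{nd}$, whereas every $y\in\Sh$ satisfies $\langle w(u),y\rangle=u^{\top}M(y)u\le\lambda_{\max}(M(y))\le 2\sqrt{nd}$, so $\Sh$ lies in the halfspace $H_u=\{y:\langle w(u),y\rangle\le 2\sqrt{nd}\}$ and
\[
\mathrm{dist}(x,\Sh)\ \ge\ \mathrm{dist}(x,H_u)\ =\ \frac{\lambda_{\max}(M(x))-2\sqrt{nd}}{\|w(u)\|_2}\ \ge\ \frac{\lambda_{\max}(M(x))-2\sqrt{nd}}{L}.
\]
Hence $\Sh_{\delta}^{\mathrm{out}}\subseteq\{x:2\sqrt{nd}<\lambda_{\max}(M(x))\le 2\sqrt{nd}+L\delta\}$, and if $F$ denotes the distribution function of $Z:=\lambda_{\max}(\sum_i\bg_i\bA^{(i)})$ with $\bg\sim\G^{n}$, the definition of $\GSA$ gives $\GSA(\Sh)\le\liminf_{\delta\to0}\tfrac1\delta\bigl(F(2\sqrt{nd}+L\delta)-F(2\sqrt{nd})\bigr)=L\cdot F'(2\sqrt{nd})$. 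Notably this uses no regularity or boundedness of $\Sh$ at all, only the $\liminf$ definition.

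The next step is to bound $F'(2\sqrt{nd})$ in a way that needs nothing about the geometry of $\partial\Sh$ or the law of the $\bA^{(i)}$. Write $\bg=R\,\Theta$ with $R=\|\bg\|_2\sim\chi_n$ independent of $\Theta$ uniform on $S^{n-1}$, and put $\mu(\Theta)=\lambda_{\max}(\sum_i\Theta_i\bA^{(i)})$, so $Z=R\,\mu(\Theta)$. For $s>0$ the event $\{\mu(\Theta)\le 0\}$ contributes only the $s$-independent constant $\Pr[\mu(\Theta)\le 0]$, so $F$ is differentiable with $F'(s)=\E_{\Theta}\bigl[\mu(\Theta)^{-1}f_{\chi_n}(s/\mu(\Theta))\,\mathbf 1_{\{\mu(\Theta)>0\}}\bigr]$, where $f_{\chi_n}$ is the $\chi_n$-density. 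Since $m^{-1}f_{\chi_n}(s/m)\le s^{-1}\sup_{r>0}rf_{\chi_n}(r)$ for every $m>0$, and $\sup_{r>0}rf_{\chi_n}(r)=\tfrac{2(n/2)^{n/2}e^{-n/2}}{\Gamma(n/2)}\le\sqrt{n/\pi}$ by the Stirling bound $\Gamma(z)\ge\sqrt{2\pi}\,z^{z-1/2}e^{-z}$, we obtain $F'(2\sqrt{nd})\le\tfrac{1}{2\sqrt{nd}}\sqrt{n/\pi}=\tfrac{1}{2\sqrt{\pi d}}$, for every realisation of the $\bA^{(i)}$. Together with the first step this gives the deterministic bound $\GSA(\Sh)\le L/(2\sqrt{\pi d})$.

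Finally I would show $L\le 4\sqrt n$ with probability at least $1-e^{-n/50}$. Writing $L=\sup_{a\in S^{n-1},\,v\in S^{d-1}}\sum_i a_i v^{\top}\bA^{(i)}v=\sup_{a\in S^{n-1}}\langle a,\mathcal A(vv^{\top})\rangle$ for the Gaussian operator $\mathcal A(M)=(\langle\bA^{(i)},M\rangle)_i$ on $\mathrm{Sym}_d$, this is the supremum of a centred Gaussian process of constant variance $2$, and Chevet's inequality yields $\E L\le\sqrt{2n}+2\sqrt d$ (the $\sqrt{2n}$ from the Gaussian width $\sqrt n$ of $S^{n-1}$, the $2\sqrt d$ from the Gaussian width of the rank-one locus $\{vv^{\top}:\|v\|=1\}$, which equals $\E\,\lambda_{\max}(\mathrm{GOE}_d)\le 2\sqrt d$). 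As $L$ is a $\sqrt2$-Lipschitz function of the underlying i.i.d.\ standard Gaussians, Gaussian concentration gives $\Pr[L>\E L+t]\le e^{-t^2/4}$; since $d\le n$ (a fortiori in the regime $d\le n/C$) forces $\E L\le(2+\sqrt2)\sqrt n<4\sqrt n$, the choice $t=4\sqrt n-\E L\ge\tfrac12\sqrt n$ bounds the failure probability by $e^{-n/16}\le e^{-n/50}$. Combining, $\GSA(\Sh)\le 4\sqrt n/(2\sqrt{\pi d})=2\sqrt n/(\sqrt{\pi d})$ with probability at least $1-e^{-n/50}$. The only step requiring real care is the bound $\E L\le\sqrt{2n}+2\sqrt d$: a crude $\varepsilon$-net over the pairs $(a,v)$ loses a constant factor of roughly $10$, too weak for the stated constant $2/\sqrt{\pi}$, so one genuinely wants Chevet's (or Gordon's) comparison inequality together with the classical bound $\E\,\lambda_{\max}(\mathrm{GOE}_d)\le 2\sqrt d$. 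Everything else is soft; in particular the argument never needs to know how $\mu(\Theta)$ is distributed.
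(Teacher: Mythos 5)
Your proposal addresses only the upper bound $\GSA(\Sh) \leq 2\sqrt{n}/\sqrt{\pi d}$. The lower bound $\GSA(\Sh) \geq c\sqrt{n/d}$ --- which is the main content of the theorem, since it exhibits spectrahedra with polynomially large Gaussian surface area --- is missing entirely. To supply it one must (i) show that, with high probability over the GOE matrices, the density $F'(\cdot)$ of $\lambda_{\max}(\sum_i\bg_i\bA^{(i)})$ at $2\sqrt{nd}$ is $\Omega(1/\sqrt{d})$, which the paper does by combining the same radial decomposition $\bg = R\Theta$ you use with the Ledoux--Rider small-deviation bound (Theorem~\ref{thm:tracy}) to localize $\mu(\Theta)=\lambda_{\max}(\sum_i\Theta_i\bA^{(i)})$ in a window $2\sqrt{d}[1\pm O(n^{-1/2})]$ and with the pointwise \emph{lower} bound on the $\chi_n$ density near $\sqrt{n}$ (Fact~\ref{fact:chidistribution}, Item 1); and (ii) prove the reverse shell containment $\De_\delta^{\mathrm{in}}\subseteq\Sh_{O(\delta)}^{\mathrm{in}}$, which needs a high-probability \emph{lower} bound $\inf_{v\in S^{d-1}}\|w(v)\|\gtrsim\sqrt{n}$. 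Neither ingredient appears in your proposal, and the second is genuinely not a corollary of your argument: Chevet/Gordon plus Borell--TIS controls $\sup_v\|w(v)\|$ from above but says nothing about $\inf_v\|w(v)\|$.

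Within the upper bound itself, your structure (trap $\Sh_\delta^{\mathrm{out}}$ in a $\lambda_{\max}$-shell of width $L\delta$, then bound the density $F'(2\sqrt{nd})$ by $1/(2\sqrt{\pi d})$ via the radial decomposition and $\sup_{r>0}r f_{\chi_n}(r)\le\sqrt{n/\pi}$) is the same as the paper's. The genuine difference is in bounding $L=\sup_{a\in S^{n-1}}\lambda_{\max}(\sum_i a_i\bA^{(i)})$. The paper bounds each $\|\bW_v\|$ by $\chi^2$ concentration, unions over a $10^{-4}$-net of $S^{d-1}$ (cardinality $10^{5d}$, which is why $d\le n/C$ is needed), and extends to all of $S^{d-1}$ by a rank-two perturbation argument; that yields both $\sup_v\|\bW_v\|\le 2\sqrt n$ and $\inf_v\|\bW_v\|\ge\sqrt n/2$ in one shot. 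Your alternative route via Chevet/Gordon and Gaussian concentration is cleaner and nets a comparable constant, but it requires two points of care you gesture at but do not carry out: one needs the sharp Gordon form of Chevet (the classical form loses an absolute constant, as you note), and one must verify that after identifying $\mathrm{Sym}_d$ with $\R^{d(d+1)/2}$ via a Frobenius-orthonormal basis, the operator $\mathcal A/\sqrt2$ has i.i.d.\ standard Gaussian entries so that Chevet applies and the $\sqrt2$ normalization turns $\sqrt n+\sqrt{2d}$ into $\sqrt{2n}+2\sqrt d$. Even granting all of this, the proposal proves only the ``Moreover'' clause of the theorem.
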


%\begin{remark}

The theorem shows the existence of spectrahedra with $\GSA$ of $\Omega(n^{1/8})$. (In fact, a random spectrahedron as above satisfies this with constant probability). This lower bound can be contrasted with the $\GSA$ upper bound of Ball~\cite{ball1993reverse} of $O(n^{1/4})$ for \emph{arbitrary} convex bodies.
Moreover, the lower bound shows that in contrast to the case of polytopes, the $\GSA$ of spectrahedra  can depend polynomially on $d$.
A natural open question is how large the $\GSA$ of arbitrary spectrahedra can be; can spectrahedra with small $d$ (say, polynomial in $n$) achieve a $\GSA$ of $\Theta(n^{1/4})$?

%%%%%%%%%%%%%%%%%%%%%%%%%%%%%%%%%%%%%%%%%%%%%%%%%%%%%%%%%%%%%%%%%%%%%%%%%%%%%%%%%%%%%%%%
\section{Preliminaries}\label{sec:pre}
%%%%%%%%%%%%%%%%%%%%%%%%%%%%%%%%%%%%%%%%%%%%%%%%%%%%%%%%%%%%%%%%%%%%%%%%%%%%%%%%%%%%%%%%

For a matrix $A$, $\lambda_{\max}(A)$ is the maximum eigenvalue of $A$. We use $\bg,\bx,\bA$ to denote random variables.
We let $\G(0,\sigma^2)$ be the normal distribution with mean $0$ and variance $\sigma^2$. We denote by $\H_{d}$ the $d\times d$ Gaussian orthogonal ensemble (GOE).  Namely, $\bA\sim\H_{d}$ if
it is a symmetric matrix with entries $\set{\bA_{i,j}}_{i\leq j}$ independently distributed satisfying $\bA_{i,j}\sim \G(0,1)$ for $i<j$ and $\bA_{i,i}\sim \G(0,2)$. 
To keep notations short, for $b\geq 0$ we use $[a\pm b]$ to represent the interval $[a-b, a+b]$. For every $c\geq 0$, we use $c\cdot[a\pm b]$ to represent the interval $[ac\pm bc]$. We denote the set of $n$-dimensional unit vectors by $S^{n-1}$. Finally, we let $\chi_n$ be the $\chi$ distribution with $n$ degrees of freedom, which is the square root of the sum of the squares of $n$ independent standard normal variables. The following are some simple facts about the $\chi$ distribution.

\begin{fact}
\label{fact:chidistribution}
Let $n\in \mathbb{Z}_{>0}$ and $h(\cdot)$ be the pdf of $\chi_n$. Then the following hold.
\begin{enumerate}
    \item $h(x)\geq c$ for $x\in[\sqrt{n}\pm c]$, where $c>0$ is an absolute constant. 
    
    \item  $h(x)\leq 
    \sqrt{n}/(\sqrt{\pi}\cdot |x|)$ for $x\in\reals$.
\end{enumerate}
\end{fact}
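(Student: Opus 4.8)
The plan is to reduce everything to elementary estimates on the explicit density $h=h_n$ of $\chi_n$, which is supported on $x>0$ and there equals $h(x)=\frac{2^{1-n/2}}{\Gamma(n/2)}\,x^{n-1}e^{-x^2/2}$ (for $x\le 0$ it is $0$, so both claims are trivial there, reading the right-hand side of part~2 as $+\infty$ at $x=0$). The two parts are handled by different standard tools: the sharp form of Stirling's formula for part~2, and log-concavity together with Gaussian concentration for part~1.

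For part~2, I would first note it suffices to bound $\sup_{x>0}x\,h(x)$. Since $x\,h(x)\propto x^{n}e^{-x^2/2}$ has log-derivative $n/x-x$, which vanishes only at $x=\sqrt n$ and changes sign from $+$ to $-$ there, the supremum is attained at $x=\sqrt n$ and equals $\frac{2^{1-n/2}}{\Gamma(n/2)}n^{n/2}e^{-n/2}$; writing $m=n/2$ this simplifies to $2m^{m}e^{-m}/\Gamma(m)$. I would then finish by plugging in the standard lower bound $\Gamma(m+1)\ge\sqrt{2\pi m}\,(m/e)^{m}$, valid for all real $m>0$ (it is the statement that the remainder in Stirling's series is positive), equivalently $\Gamma(m)\ge\sqrt{2\pi/m}\,(m/e)^{m}$, which collapses the expression to exactly $2\sqrt{m/(2\pi)}=\sqrt{2m/\pi}=\sqrt{n/\pi}$. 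The one point requiring care is that this inequality is asymptotically an \emph{equality} (the ratio of the two sides tends to $1$ as $n\to\infty$), so a crude version of Stirling would not suffice; I expect this to be the only nontrivial step.

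For part~1, the plan is: (i) note $(\ln h)''(x)=-(n-1)/x^2-1<0$, so $h$ is log-concave on $(0,\infty)$ with mode $x^\star=\sqrt{n-1}$; (ii) use standard concentration of $\chi_n=\|\bg\|$ around $\sqrt n$ — Gaussian Lipschitz concentration, together with $|\E\chi_n-\sqrt n|\le 1$ (which follows from the Poincaré bound $\mathrm{Var}(\chi_n)\le 1$, using $(\E\chi_n)^2=n-\mathrm{Var}(\chi_n)$) — to obtain $\int_{\sqrt n-C_1}^{\sqrt n+C_1}h\ge 3/4$ for an absolute constant $C_1$ (say $C_1=4$), hence $h(x^\star)=\max_{\reals}h\ge 3/(8C_1)$; (iii) transfer this lower bound from $x^\star$ to a small neighborhood of $\sqrt n$: assuming $n\ge 5$ (so $x^\star\ge 2$), for $|x-x^\star|\le 1$ one has $x\ge x^\star/2$, hence $|(\ln h)''(x)|\le 4(n-1)/(n-1)+1=5$, and since $(\ln h)'(x^\star)=0$, Taylor's theorem gives $h(x)\ge h(x^\star)\,e^{-\frac52|x-x^\star|^2}$; combined with $|\sqrt n-x^\star|=\sqrt n-\sqrt{n-1}\le 1/(2\sqrt{n-1})\le 1/4$, this yields $h(x)\ge\frac{3}{8C_1}e^{-5/2}$ for all $x\in[\sqrt n\pm 3/4]$. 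Taking $c$ below this absolute constant (and below $3/4$) proves the claim for $n\ge 5$; the finitely many cases $n\in\{1,2,3,4\}$ follow since each $h_n$ is continuous and strictly positive on a neighborhood of $\sqrt n\in(0,\infty)$, and we simply shrink $c$ further to accommodate them. I do not anticipate any genuine obstacle in part~1 — it is routine concentration plus bookkeeping around the mode.
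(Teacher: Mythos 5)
Your proof of Item 2 is correct and is essentially the paper's argument: both reduce to $\sup_{x>0} x\,h(x)$, locate the maximum of $x^n e^{-x^2/2}$ at $x=\sqrt n$, and finish with the lower Stirling bound $\Gamma(z)\ge \sqrt{2\pi}\,z^{z-1/2}e^{-z}$, which indeed yields exactly $\sqrt{n/\pi}$. For Item 1, however, you take a genuinely different route. The paper stays entirely inside the explicit formula: it uses unimodality of $x^{n-1}e^{-x^2/2}$ (increasing up to $\sqrt{n-1}$, decreasing after) to reduce to the two endpoints $\sqrt n\pm c$, and then divides by the matching \emph{upper} Stirling bound $\Gamma(z)\le\sqrt{2\pi}z^{z-1/2}e^{-z+1/(12z)}$ to extract an absolute constant. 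You instead lower-bound the mode value $h(x^\star)$ by a soft pigeonhole argument — Gaussian Lipschitz concentration of $\|\bg\|$ plus $|\E\chi_n-\sqrt n|\le 1$ forces at least $3/4$ of the mass into an interval of length $2C_1$, so the maximum of the density is at least $3/(8C_1)$ — and then propagate this to $[\sqrt n\pm 3/4]$ via the uniform bound $|(\ln h)''|\le 5$ near the mode and $|\sqrt n-x^\star|\le 1/4$. I checked the constants ($h(x^\star)\ge 3/32$, $h(x)\ge \tfrac{3}{32}e^{-5/2}$ on $[\sqrt n\pm 3/4]$ for $n\ge 5$) and the argument is sound, including the separate continuity/positivity treatment of $n\in\{1,2,3,4\}$ (needed since your derivative bounds degrade there). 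The trade-off: the paper's proof is shorter and uniform in $n$ but leans on the precise two-sided Stirling estimates; yours needs only the lower Gamma bound (and only for Item 2), is more conceptual and robust to perturbations of the density, but costs a case split for small $n$ and more bookkeeping. Either is acceptable.
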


\begin{proof}
Recall that by definition
$$
h(x)=\frac{1}{2^{\frac{n}{2}-1}\Gamma(\frac{n}{2})}x^{n-1}e^{-x^2/2}
$$
for $x\geq 0$, where $\Gamma(\cdot)$ denotes the gamma function, and $h(x)=0$ otherwise. 
By elementary calculus, $x^{n-1}e^{-x^2/2}$ monotonically increases  for  $0\leq x< \sqrt{n-1}$ and monotonically decreases for $x>\sqrt{n-1}$. We therefore have
\begin{equation}\label{eqn:2}
    x^{n-1}e^{-x^2/2}
    \geq
    \min\set{(\sqrt{n}+c)^{n-1}e^{-(\sqrt{n}+c)^2/2},(\sqrt{n}-c)^{n-1}e^{-(\sqrt{n}-c)^2/2}}
\end{equation}
 for $0<c\leq 1$ and $x\in[\sqrt{n}\pm c]$.
Item 1 now follows from Eq.~\eqref{eqn:2} and the fact that $\Gamma(z)\leq\sqrt{2\pi}z^{z-1/2}e^{-z+1/(12z)}$ for all $z>0$~\cite{stirling,jameson_2015}.

Item 2 is trivial for $x\leq 0$.  For $x>0$, it follows from the inequalities $\Gamma(z)\geq \sqrt{2\pi}z^{z-1/2}e^{-z}$ for all $z>0$~\cite{stirling,jameson_2015} and  $x^n e^{-x^2/2}\leq n^{n/2}e^{-n/2}$, which follows from the same argument as~above.
\end{proof}

\begin{lemma}[{\cite[comment below Lemma 1]{10.1214/aos/1015957395}}]\label{lem:chi2}
  For $n\geq 1$, let   $\boldr$ be a random variable distributed according to $\chi_n$. Then for every $x>0$, we have
  \[
  \Pr\Br{n-2\sqrt{nx}\leq \boldr^2\leq n+2\sqrt{nx}+2x} \geq 1-2e^{-x} \; .
  \]
\end{lemma}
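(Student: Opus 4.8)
The plan is to decouple the two-sided bound into the one-sided tail estimates
\[
\Pr\!\left[\boldr^2 \ge n + 2\sqrt{nx} + 2x\right] \le e^{-x}
\quad\text{and}\quad
\Pr\!\left[\boldr^2 \le n - 2\sqrt{nx}\right] \le e^{-x},
\]
and then take a union bound over the two complementary events (the second event being vacuous when $n < 2\sqrt{nx}$). Both estimates I would obtain by the Cram\'er--Chernoff (moment generating function) method applied to $\boldr^2$, which, since $\boldr \sim \chi_n$, equals $\sum_{i=1}^n \bz_i^2$ for i.i.d.\ standard normals $\bz_1,\dots,\bz_n$. The only input is $\Exp[e^{\lambda \bz_i^2}] = (1-2\lambda)^{-1/2}$ for $\lambda < 1/2$, which gives the centered cumulant generating function
\[
\psi(\lambda) := \log \Exp\!\left[e^{\lambda(\boldr^2 - n)}\right] = -\tfrac{n}{2}\log(1-2\lambda) - \lambda n,
\qquad \lambda < \tfrac12 .
\]

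For the upper tail I would bound $\psi$ on $[0,\tfrac12)$ via the elementary inequality $-\log(1-t) - t \le \tfrac{t^2}{2(1-t)}$ for $t\in[0,1)$ (compare the power series $\sum_{k\ge2}t^k/k$ and $\sum_{k\ge2}t^k/2$ termwise), which with $t = 2\lambda$ yields $\psi(\lambda) \le \tfrac{n\lambda^2}{1-2\lambda}$. Markov's inequality then gives, for every $0\le\lambda<\tfrac12$,
\[
\Pr\!\left[\boldr^2 - n \ge t\right] \le \exp\!\left(\tfrac{n\lambda^2}{1-2\lambda} - \lambda t\right).
\]
Plugging in $t = 2\sqrt{nx} + 2x$ together with the explicit choice $\lambda = \sqrt{x}/(\sqrt{n}+2\sqrt{x}) \in [0,\tfrac12)$, one computes $1 - 2\lambda = \sqrt{n}/(\sqrt{n}+2\sqrt{x})$, hence $\tfrac{n\lambda^2}{1-2\lambda} = \sqrt{n}\,x/(\sqrt{n}+2\sqrt{x})$ and $\lambda t = 2x(\sqrt{n}+\sqrt{x})/(\sqrt{n}+2\sqrt{x})$, so the exponent collapses to exactly $-x$. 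This is the upper tail.

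For the lower tail I would instead set $u = -2\lambda \ge 0$ and use $u - \log(1+u) \le u^2/2$ for $u\ge0$, which gives $\psi(\lambda) \le n\lambda^2$ for all $\lambda \le 0$; that is, $\boldr^2 - n$ is sub-Gaussian on the left with variance proxy $2n$. Markov's inequality optimized at $\lambda = -\sqrt{x/n}$ then gives $\Pr[\boldr^2 - n \le -2\sqrt{nx}] \le e^{-x}$. Combining, the complement of the event in the statement is contained in the union of $\{\boldr^2 > n + 2\sqrt{nx}+2x\}$ and $\{\boldr^2 < n - 2\sqrt{nx}\}$, each of probability at most $e^{-x}$, giving the claimed $1 - 2e^{-x}$.

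There is no serious obstacle here: this is the standard proof of the Laurent--Massart $\chi^2$ deviation inequality. The only mildly delicate points are the two elementary cumulant estimates and verifying that the explicit $\lambda$ for the upper tail yields the exponent $-x$ with no slack — which is exactly what produces the clean threshold $n + 2\sqrt{nx} + 2x$ rather than one with worse constants.
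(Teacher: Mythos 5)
Your proof is correct: the paper does not prove this lemma at all but cites it from Laurent--Massart, and your argument is precisely the standard Cram\'er--Chernoff derivation of that $\chi^2$ deviation bound, with the cumulant estimates, the choice $\lambda=\sqrt{x}/(\sqrt{n}+2\sqrt{x})$, and the resulting exponent $-x$ all checking out. Nothing further is needed.
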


For our purposes, it will be convenient to use an alternative definition of Gaussian surface area in terms of the \emph{inner} surface area. Namely, for
$S_{\delta}^{\mathrm{in}}=\set{x \in S:\mathrm{dist}(x,S^c}\leq\delta)$ where $S^c$ is the complement of the body $S$, we define,
\begin{align}
    \label{eq:innerGSA}
    \GSA\br{S}=\lim_{\delta\rightarrow 0}\frac{\G^n\br{S^{\text{in}}_{\delta}}}{\delta} \; .
\end{align}
It follows from Huang et al.~\cite[Theorem 3.3]{huang2021minkowski} that this definition is equivalent to the one in Eq.~\eqref{eq:gsaoriginaldefinitionwithouter} when $S$ is a convex body that contains the origin, which is sufficient for our purposes. 

To prove our main theorem, we use the following facts, starting with a well known bound on the size of an $\varepsilon$-net of the $n$-dimensional sphere.

\begin{fact}[{\label{fac:epsnet}\cite[Lemma 2.3.4]{tao2012topics}}]
For every $d \ge 1$ and any $0<\varepsilon<1/2$ there exists an $\varepsilon$-net of the sphere $S^{d-1}$ of cardinality at most
$(3/\varepsilon)^d$.
\end{fact}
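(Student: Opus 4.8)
The plan is to run the standard greedy packing argument. First I would construct a \emph{maximal} $\varepsilon$-separated subset $N\subseteq S^{d-1}$, that is, a subset with $\|x-y\|>\varepsilon$ for every pair of distinct $x,y\in N$ that cannot be enlarged while keeping this property. Such an $N$ exists by Zorn's lemma (or, since $S^{d-1}$ is compact, by a straightforward greedy/transfinite construction). The key observation is that maximality forces $N$ to be an $\varepsilon$-net: if some $z\in S^{d-1}$ were at distance more than $\varepsilon$ from every point of $N$, then $N\cup\{z\}$ would still be $\varepsilon$-separated, contradicting maximality. So it remains only to bound $|N|$.

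For the cardinality bound I would switch to a volume comparison of full-dimensional balls in $\R^d$. Because the points of $N$ are pairwise at distance strictly greater than $\varepsilon$, the open Euclidean balls $B(x,\varepsilon/2)$, $x\in N$, are pairwise disjoint. Since each $x\in N$ has $\|x\|=1$, each of these balls is contained in $B(0,1+\varepsilon/2)$. Comparing $d$-dimensional Lebesgue volumes and using that the volume of a $d$-dimensional ball of radius $r$ is proportional to $r^d$, the disjoint union gives
\[
|N|\cdot\left(\frac{\varepsilon}{2}\right)^{d}\;\le\;\left(1+\frac{\varepsilon}{2}\right)^{d},
\qquad\text{hence}\qquad
|N|\;\le\;\left(1+\frac{2}{\varepsilon}\right)^{d}.
\]
Finally, for $0<\varepsilon<1/2$ one has $1+2/\varepsilon\le 3/\varepsilon$ (indeed this holds for all $\varepsilon\le 2$), so $|N|\le(3/\varepsilon)^{d}$, which is the claimed bound.

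There is essentially no genuine obstacle here; the argument is entirely routine. The one point worth stating carefully is that the packing is of \emph{solid} balls in $\R^d$ rather than geodesic caps on the sphere, which is exactly what lets us use the clean scaling of Lebesgue volume and avoid any spherical-cap volume estimates. A slightly sharper constant could be obtained by confining the small balls to the shell between radii $1-\varepsilon/2$ and $1+\varepsilon/2$, but this refinement is unnecessary for the stated estimate.
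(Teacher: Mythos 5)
Your proof is correct and is the standard volume-packing argument (maximal $\varepsilon$-separated set is a net; disjoint balls of radius $\varepsilon/2$ packed into $B(0,1+\varepsilon/2)$); the paper does not prove this fact but cites it from Tao's book, where essentially the same argument appears. One small slip in a parenthetical: $1+2/\varepsilon\le 3/\varepsilon$ is equivalent to $\varepsilon\le 1$, not $\varepsilon\le 2$ as you claim, but this is immaterial since the statement only concerns $0<\varepsilon<1/2$.
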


The following claim gives a formula for the pdf of the product of two real-valued random variables.

\begin{claim}[{\cite[Page 134, Theorem 3]{rohatgi_saleh_2015}}]\label{clm:productpdf}
Let $\bx,\by$ be two real-valued random variables and $f$ be the pdf of $(\bx,\by)$. Then the pdf of $\bz=\bx\cdot\by$ is given by
  \[
  g\br{z}=\int_{-\infty}^{\infty}f\br{x,\frac{z}{x}}\cdot \frac{1}{|x|}dx.
  \]
\end{claim}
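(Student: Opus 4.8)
The plan is to obtain the formula by the classical route of introducing an auxiliary variable, applying the multivariate change-of-variables formula for densities, and then integrating it out. Since $(\bx,\by)$ is assumed to have a density $f$, the line $\{x=0\}$ is a Lebesgue-null set, so throughout I may restrict attention to $\{x\neq 0\}$ and ignore what happens on $\{x=0\}$.

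First I would consider the joint law of the pair $(\bz,\bx)=(\bx\by,\bx)$, using $\bx$ itself as the auxiliary coordinate. On $\{x\neq 0\}$ the map $\Phi(x,y)=(xy,x)$ is a $C^1$ diffeomorphism onto $\{w\neq 0\}$, with inverse $\Phi^{-1}(z,w)=(w,z/w)$; restricting to $\{x>0\}$ and $\{x<0\}$ separately makes this transparent. Its Jacobian is
\[
\left|\det\frac{\partial(x,y)}{\partial(z,w)}\right|
=\left|\det\begin{pmatrix}0 & 1\\ 1/w & -z/w^2\end{pmatrix}\right|=\frac{1}{|w|},
\]
so the change-of-variables formula gives that $(\bz,\bx)$ has joint density $f_{\bz,\bx}(z,w)=f(w,z/w)\cdot\frac{1}{|w|}$ on $\{w\neq 0\}$.

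Next I would recover the density of $\bz$ by marginalizing out the second coordinate: since the integrand is nonnegative, Tonelli's theorem applies and
\[
g(z)=\int_{-\infty}^{\infty}f_{\bz,\bx}(z,w)\,dw=\int_{-\infty}^{\infty}f\!\left(w,\frac{z}{w}\right)\frac{1}{|w|}\,dw,
\]
which is the asserted expression after renaming $w$ to $x$. As a sanity check I would verify $\int_{\R}g(z)\,dz=1$ (again via Tonelli, using $\int f=1$), confirming that no probability mass is lost at the excluded set $\{x=0\}$.

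The only genuinely delicate points — and the closest thing to an obstacle — are measure-theoretic: one must invoke that pushing an absolutely continuous law forward under the $C^1$ diffeomorphism $\Phi$ of $\{x\neq 0\}$ produces an absolutely continuous law with exactly the stated density (a standard consequence of the change-of-variables theorem, applied separately on $\{x>0\}$ and $\{x<0\}$), and one must justify the marginalization, which is immediate from nonnegativity. An alternative derivation, sidestepping the auxiliary variable, is to write the cdf $G(z)=\Pr[\bx\by\le z]=\iint_{\{xy\le z\}}f(x,y)\,dx\,dy$, split the region of integration by the sign of $x$ (using $xy\le z\iff y\le z/x$ when $x>0$ and $y\ge z/x$ when $x<0$), and differentiate under the integral sign in $z$; the two contributions combine, with the sign of $x$ producing the weight $1/|x|$. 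I would present the auxiliary-variable argument as the main one, since its Jacobian bookkeeping is the cleanest.
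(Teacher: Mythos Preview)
The paper does not prove this claim at all; it merely cites it from \cite{rohatgi_saleh_2015}. Your argument is the standard textbook derivation (auxiliary variable plus change of variables, then marginalize), and it is correct as written, so there is nothing to compare beyond noting that you have supplied a proof where the paper gives only a reference.
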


\begin{theorem}[{\cite[Theorem 1]{10.1214/EJP.v15-798}}]
\label{thm:tracy}
Let $\bA\sim\H_d$. For every  $0<\eta<1$, 
it holds that
\[
\Pr \Big[{\lambda_{\max}\br{\bA}\in 2\sqrt{d} \Br{1\pm\eta}}\Big]\geq 1- C\cdot e^{-d\eta^{3/2}/C},
\]
for some absolute constant $C>0$. 
\end{theorem}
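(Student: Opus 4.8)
The statement is the standard large‑deviation estimate for the soft edge of the Gaussian orthogonal ensemble; I sketch how I would prove each of the two tails (in the write‑up one may also simply invoke~\cite{10.1214/EJP.v15-798}, the only point being to match normalizations: the off‑diagonal entries of $\H_d$ have variance $1$, so $\bA/\sqrt d$ obeys Wigner's semicircle law on $[-2,2]$ and the edge of $\bA$ sits at $2\sqrt d$).

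\emph{Upper tail.} I would bound $\Pr[\lambda_{\max}(\bA)\ge 2\sqrt d\,(1+\eta)]$ by the moment method. Since $\lambda_{\max}(\bA)^{2k}\le\Tr(\bA^{2k})$ for every $k\in\mathbb Z_{>0}$, Markov's inequality gives
\[
\Pr\Big[\lambda_{\max}(\bA)\ge 2\sqrt d\,(1+\eta)\Big]\ \le\ \frac{\E\big[\Tr(\bA^{2k})\big]}{\big(2\sqrt d\,(1+\eta)\big)^{2k}}\,.
\]
Wigner's expansion of $\E[\Tr(\bA^{2k})]$ over closed walks, with the usual bookkeeping of the non‑tree walks, gives $\E[\Tr(\bA^{2k})]\le d\,(4d)^k\,(C_k/4^k)\,e^{O(k^3/d^2)}$ for $k=O(d^{2/3})$, where $C_k=\tfrac1{k+1}\binom{2k}{k}$ is the Catalan number and $C_k/4^k=\Theta(k^{-3/2})$. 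Using $(1+\eta)^{-2k}\le e^{-k\eta}$ for $0<\eta<1$, the bound becomes $O\big(d\,k^{-3/2}\,e^{-k\eta+O(k^3/d^2)}\big)$. Taking $k=\lceil\delta\,\eta^{1/2}d\rceil$ for a small enough absolute constant $\delta>0$ makes the exponent at most $-c\,\eta^{3/2}d$, while the prefactor $d\,k^{-3/2}=O(\eta^{-3/4}d^{-1/2})$ is $O(1)$ in the only regime where the asserted bound is non‑vacuous, namely $\eta^{3/2}d\gtrsim1$ (equivalently $\eta\gtrsim d^{-2/3}$); in the complementary regime $C e^{-d\eta^{3/2}/C}\ge1$ once $C$ is large. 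Readjusting constants finishes this tail. Note the $k^{-3/2}$ factor, i.e.\ the Catalan asymptotics, is essential here: a cruder $C_k\le4^k$ leaves a prefactor $d$ that the exponential cannot absorb in the moderate‑deviation range.

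\emph{Lower tail.} A test‑vector argument is hopeless here: $\H_d$ is orthogonally invariant, so in any \emph{fixed} orthonormal frame the quadratic forms $v_i^\top\bA v_i$ are i.i.d.\ $\G(0,2)$, whose maximum is only $\Theta(\sqrt{\log d})\ll2\sqrt d$ --- the largeness of $\lambda_{\max}(\bA)$ is a collective effect of eigenvalue repulsion. I would instead use the joint eigenvalue density of the GOE, proportional to $\prod_{i<j}|\lambda_i-\lambda_j|\,e^{-\frac14\sum_i\lambda_i^2}$. The event $\{\lambda_{\max}(\bA)\le2\sqrt d\,(1-\eta)\}$ confines all $d$ eigenvalues to $[-2\sqrt d,\,2\sqrt d\,(1-\eta)]$, and comparing the constrained partition function to the unconstrained one --- i.e.\ the Coulomb‑gas energy of the constrained equilibrium measure against that of the semicircle --- shows this costs a factor $e^{-\Theta(d^2\eta^3)}$, the sharp left‑tail rate, which is $\le e^{-\Omega(d\eta^{3/2})}$ exactly when $d\eta^{3/2}\gtrsim1$. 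A quicker, weaker route that already suffices: pick a smooth bump $\phi$ supported just above $2\sqrt d\,(1-\eta)$ with $\E[\Tr\phi(\bA)]=\Theta(d\eta^{3/2})$ (this mass is read off the semicircle density near the edge, together with concentration of the empirical spectral distribution); on the event in question $\Tr\phi(\bA)=0$, and as a smooth, compactly supported linear eigenvalue statistic $\Tr\phi(\bA)$ concentrates about its mean at the $O(1)$ scale with sub‑Gaussian tails, so $\Pr[\Tr\phi(\bA)=0]\le e^{-\Omega((d\eta^{3/2})^2)}$.

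\emph{Main obstacle.} The upper tail is routine once the Catalan asymptotics of $\E[\Tr(\bA^{2k})]$ are in hand; the substantive difficulty is the lower tail. Neither Gaussian concentration of $\bA\mapsto\lambda_{\max}(\bA)$ (which only yields $e^{-\Omega(d\eta^2)}$, too weak for small $\eta$ since $\eta^2\ll\eta^{3/2}$) nor any test‑vector/test‑frame device captures the correct exponential rate, so one is genuinely forced into the Coulomb‑gas estimate or, equivalently, into sharp concentration of linear eigenvalue statistics. For this reason I would, in the paper itself, simply cite~\cite{10.1214/EJP.v15-798}.
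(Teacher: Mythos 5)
The paper offers no proof of Theorem~\ref{thm:tracy}: it is imported as a black box from Ledoux and Rider \cite{10.1214/EJP.v15-798}, so your closing recommendation to simply cite that reference coincides with what the paper actually does, and the normalization check you perform (off-diagonal variance $1$, hence edge at $2\sqrt d$) is the only verification the paper implicitly relies on. Your sketch is therefore extra material rather than a competing route, and the parts that carry the weight are sound: the upper tail via $\E[\Tr(\bA^{2k})]$ with $k\asymp\eta^{1/2}d$ and the Catalan asymptotics is exactly Ledoux's argument, and your observation that the prefactor only needs to be controlled in the regime $\eta\gtrsim d^{-2/3}$ (elsewhere the bound is vacuous) is the correct way to close that tail. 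For the lower tail, note that \cite{10.1214/EJP.v15-798} in fact proves the stronger rate $Ce^{-d^2\eta^3/C}$ (via the Dumitriu--Edelman tridiagonal model rather than a direct Coulomb-gas energy comparison), and the two-sided statement quoted in the paper silently weakens it to $e^{-d\eta^{3/2}/C}$, which, as you observe, is legitimate precisely because $d^2\eta^3\ge d\eta^{3/2}$ whenever the bound is non-vacuous.

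One concrete gap in your ``quicker, weaker route'' for the lower tail: the claim that $\Tr\phi(\bA)$ concentrates at the $O(1)$ scale with sub-Gaussian tails is not available off the shelf for your choice of $\phi$. The generic Gaussian-concentration bound (Hoffman--Wielandt plus Lipschitz calculus) for a bump supported on an interval of length $\asymp\eta\sqrt d$ gives the map $A\mapsto\Tr\phi(A)$ a Lipschitz constant $\asymp\sqrt d\,\|\phi'\|_\infty\asymp 1/\eta$ in the matrix entries, hence only $\Pr[\Tr\phi(\bA)=0]\le e^{-c\,d^2\eta^5}$, which dominates $e^{-d\eta^{3/2}/C}$ only for $\eta\gtrsim d^{-2/7}$ and thus misses the range $d^{-2/3}\lesssim\eta\lesssim d^{-2/7}$. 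Genuine $O(1)$-scale sub-Gaussian concentration of a mesoscopic linear statistic is a rigidity-type input, not a soft one. This does not affect your primary route (the Coulomb-gas/tridiagonal estimate), and it is moot for the paper since the result is cited; but if you were to write the proof out, the shortcut as stated would not cover the full parameter range.
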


%%%%%%%%%%%%%%%%%%%%%%%%%%%%%%%%%%%%%%%%%%%%%%%%%%%%%%%%%%%%%%%%%%%%%%%%%%%%%%%%%%%%%%%%
\section{Proof of main theorem}
%%%%%%%%%%%%%%%%%%%%%%%%%%%%%%%%%%%%%%%%%%%%%%%%%%%%%%%%%%%%%%%%%%%%%%%%%%%%%%%%%%%%%%%%

The core of the argument is in the following lemma, bounding $q(2 \sqrt{nd})$ where $q$ is the pdf of the largest eigenvalue of the matrix showing up in Eq.~\eqref{eq:main theorem spectrahedron}. We will later show that this value is essentially the same as $\GSA(\Sh)$, where $\Sh$ is the spectrahedron in the statement of the theorem. 

\begin{lemma}
\label{lem:main}
For $n,d\geq 1$ and $A^{(1)},\ldots, A^{(n)} \in \mathrm{Sym}_d$, let $q(\cdot)$ be the probability density function of 
\[
 \lambda_{\max}\br{ \sum_i\bx_i A^{(i)}} \; ,
\]
 where $\bx=(\bx_1,\ldots,\bx_n)$ is a random vector and each entry is i.i.d.\ drawn from $\G(0,1)$. 
If $\bA^{(1)},\ldots,\bA^{(n)}$ are i.i.d.~drawn from the $d\times d$ Gaussian orthogonal ensemble, then
$q(2 \sqrt{nd})\geq~c\cdot\sqrt{1/d}$ with probability at least $1-C \exp(-d n^{-3/4} / C)$ (over the choice of $\bA^{(1)},\ldots,\bA^{(n)}$) where $c, C>0$ are universal constants.  Moreover, for any integer $d$ and any   $d\times d$ matrices $A^{(1)},\ldots,A^{(n)}$, $q(2 \sqrt{nd}) \leq 1/(2\sqrt{\pi d})$.
\end{lemma}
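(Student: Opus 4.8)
The plan is to reduce the statement to the behavior of the largest eigenvalue of a \emph{single} GOE matrix, by passing to polar coordinates for $\bx$. Write $\bx=\boldr\,\bu$ with $\boldr=\|\bx\|\sim\chi_n$ and $\bu=\bx/\|\bx\|$ uniform on $S^{n-1}$, independent of $\boldr$. Since $\lambda_{\max}$ is positively homogeneous and $\boldr>0$ almost surely,
\[
\lambda_{\max}\!\br{\sum_i\bx_iA^{(i)}}=\boldr\cdot\bv,\qquad\text{where}\quad\bv:=\lambda_{\max}\!\br{\sum_i\bu_iA^{(i)}}
\]
is a function of $\bu$ alone, hence independent of $\boldr$. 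Conditioning on $\bu$ and using the product‑pdf formula of Claim~\ref{clm:productpdf} (equivalently, rescaling the $\chi_n$ density by $\bv$) yields, for every $z>0$,
\[
q(z)=\E_{\bu}\Br{\mathbf{1}[\bv>0]\cdot\tfrac{1}{\bv}\,h(z/\bv)},
\]
where $h$ is the pdf of $\chi_n$; the event $\{\bv\le 0\}$ contributes nothing to $q$ at the positive point $z$, and the possible atom of $\bv$ at $0$ only affects $q$ at $0$.

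For the upper bound, which holds for arbitrary $A^{(i)}$ and needs no randomness, I would just invoke item~2 of Fact~\ref{fact:chidistribution}: on $\{\bv>0\}$ it gives $h(z/\bv)\le\sqrt n\,\bv/(\sqrt\pi z)$, so $\tfrac{1}{\bv} h(z/\bv)\le\sqrt n/(\sqrt\pi z)$ pointwise in $\bu$, whence $q(z)\le\sqrt n/(\sqrt\pi z)$, and plugging in $z=2\sqrt{nd}$ gives $q(2\sqrt{nd})\le1/(2\sqrt{\pi d})$.

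For the lower bound I would first record the invariance fact: for any \emph{fixed} unit vector $u\in S^{n-1}$, the matrix $\sum_i u_i\bA^{(i)}$ is again distributed as $\H_d$, since its entries are independent Gaussians whose off‑diagonal variances sum to $\sum_i u_i^2=1$ and whose diagonal variances sum to $2$. Hence, conditioned on $\bu=u$, $\bv$ has the law of $\lambda_{\max}(\H_d)$, and by Theorem~\ref{thm:tracy}, for the choice $\eta:=c_1 n^{-1/2}$ with $c_1$ a small absolute constant (note $\eta<1$), the event $E_u:=\{\bv\in2\sqrt d\,[1\pm\eta]\}$ satisfies $\Pr_{\bA}[E_u]\ge 1-Ce^{-d\eta^{3/2}/C}=1-Ce^{-c_2 dn^{-3/4}}$. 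On $E_u$ we have $\bv\ge2\sqrt d(1-\eta)>0$, so $1/\bv\ge1/(3\sqrt d)$, while $z/\bv=2\sqrt{nd}/\bv$ lies in $\sqrt n\cdot[\tfrac{1}{1+\eta},\tfrac{1}{1-\eta}]\subseteq[\sqrt n\pm2c_1]$ (using $\eta\le\tfrac{1}{2}$); taking $c_1$ below half the absolute constant from item~1 of Fact~\ref{fact:chidistribution} keeps this interval inside the region where $h\ge c_0$ for an absolute constant $c_0>0$. Feeding this into the displayed formula for $q$ gives
\[
q(2\sqrt{nd})\ \ge\ \frac{c_0}{3\sqrt d}\cdot\Pr_{\bu}\Br{E_{\bu}\mid\bA}.
\]

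The remaining step, and the one I expect to be the main obstacle, is to convert ``$E_u$ holds with high probability over $\bA$ for each fixed $u$'' into ``$\Pr_{\bu}[E_{\bu}\mid\bA]$ is large for a typical $\bA$''. A union bound over an $\eps$‑net of $S^{n-1}$ is hopeless here: by Fact~\ref{fac:epsnet} such a net has size $(3/\eps)^n$, which dwarfs the per‑point failure probability $e^{-\Theta(dn^{-3/4})}$, and the (mild) Lipschitzness of $\bv$ in $u$ does not rescue this. Instead I would average: by Fubini, $\E_{\bA}\Pr_{\bu}[E_{\bu}^c\mid\bA]=\Pr_{\bu,\bA}[E_{\bu}^c]=\E_{\bu}\Pr_{\bA}[E_{\bu}^c\mid\bu]\le Ce^{-c_2 dn^{-3/4}}$, so by Markov's inequality $\Pr_{\bA}\Br{\Pr_{\bu}[E_{\bu}^c\mid\bA]\ge\tfrac{1}{2}}\le2Ce^{-c_2 dn^{-3/4}}$. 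On the complementary event $\Pr_{\bu}[E_{\bu}\mid\bA]\ge\tfrac{1}{2}$, hence $q(2\sqrt{nd})\ge c_0/(6\sqrt d)$, which is the claimed bound after renaming the constants; a couple of trivial inequalities ($\eta<1$ for all $n\ge1$, and $\sqrt n-2c_1\ge0$ so that the window lies in the support of $h$) need to be checked en route.
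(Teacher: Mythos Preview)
Your proof is correct and follows essentially the same route as the paper: the polar decomposition $\bx=\boldr\bu$, the product/conditioning formula expressing $q(2\sqrt{nd})$ through the $\chi_n$-density $h$ and the law of $\bv=\lambda_{\max}(\sum_i\bu_iA^{(i)})$, the upper bound via item~2 of Fact~\ref{fact:chidistribution}, and for the lower bound the GOE invariance of $\sum_i u_i\bA^{(i)}$ combined with Theorem~\ref{thm:tracy} at scale $\eta\asymp n^{-1/2}$ followed by the Fubini--Markov argument to pass from ``each fixed $u$ succeeds with high probability over $\bA$'' to ``a typical $\bA$ makes $\Pr_{\bu}[E_{\bu}\mid\bA]\ge\tfrac12$''. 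The only cosmetic difference is that the paper writes the product formula as an integral against the pdf $p$ of $\bv$ rather than as an expectation over $\bu$, and packages the Fubini--Markov step as membership in a set $G$ of good matrix tuples; the substance is identical.
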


\begin{proof}
Let $\by\sim S^{n-1}$ be chosen uniformly from the unit sphere
and for matrices $A^{(1)},\ldots, A^{(n)}$, denote by $p$ the pdf of $\lambda_{\max}\br{\sum_i\by_i A^{(i)}}$.
Let $\boldr\sim \chi_n$ and notice that $\boldr \by$ is distributed like $\bx$ (since both are spherically symmetric and by definition, have equally distributed norms). Denote by $h$ the pdf of $\boldr$.
By Claim~\ref{clm:productpdf}, we have
\begin{eqnarray}
  q\br{2 \sqrt{ nd} } =
  \int_{-\infty}^{\infty}h\br{2\sqrt{nd}/z}p\br{z}\frac{1}{|z|}dz
   \; .
  \label{eq:equivq}
\end{eqnarray}
Using Item 2 of Fact~\ref{fact:chidistribution},
$h(2\sqrt{nd}/z)/|z| \leq 1/(2\sqrt{\pi d})$ for all $z$.  
Hence Eq.~\eqref{eq:equivq} can be bounded as  
$(1/(2\sqrt{\pi d}))\cdot \int_{-\infty}^{\infty}p\br{z}dz  = 1/(2\sqrt{\pi d})$, establishing the claimed upper bound on $q$. 

To prove the lower bound on $q$, let $\bA^{(1)},\ldots,\bA^{(n)}\sim \H_d$ be $n$ matrices chosen i.i.d.\ from the Gaussian orthogonal ensemble. 
Observe that by Theorem~\ref{thm:tracy}, we have
        \begin{align}
        \label{eq:rangeoflambdamax}
        \Pr\Big[\lambda_{\max}\br{\sum_{i=1}^n \by_i \bA^{(i)}} \in
        I\Big]\geq 1- C \exp(-d n^{-3/4} / C) \; ,
        \end{align}        
        where 
        \[
        I=2\sqrt{d}\cdot[1\pm c/\sqrt{n}] \; ,
        \]
        for some universal constants $C,c>0$.
Define the set of~matrices
\begin{equation*}\label{eqn:G}
  G=\set{\br{A^{(1)},\ldots, A^{(n)}}:\Pr\Big[\lambda_{\max}\br{\sum_{i=1}^n \by_i A^{(i)}}\in I\Big]\geq \frac{1}{2}}.
\end{equation*}
Then, using the definition of $G$ and Eq.~\eqref{eq:rangeoflambdamax}, we have
\[
\Pr\Big[ \br{\bA^{(1)},\ldots,\bA^{(n)}}\in G \Big]
\geq
1-2 C \exp(-d n^{-3/4} / C) \; .
\]
Now fix any $(A^{(1)},\ldots, A^{(n)})\in G$. By definition of $G$, 
$\int_I p\br{z} dz \geq 1/2$, and therefore the right-hand side of Eq.~\eqref{eq:equivq} is at least

\begin{eqnarray}
   \int_{I}h\br{2\sqrt{nd}/z}p\br{z}\frac{1}{z}dz
   \geq
   c \cdot\int_{I}p\br{z}\frac{1}{z}dz
   \geq \frac{c}{2\sqrt{d}(1+c/\sqrt{n})}\cdot\int_Ip(z)dz\geq
  \frac{c}{5\sqrt{d}} \; ,
   \label{eqn:1}
\end{eqnarray}
for some absolute constant $c>0$, where we used Item 1 of Fact~\ref{fact:chidistribution} to conclude that $h(2\sqrt{nd}/z)\geq c$ for all $z \in I$. 
\end{proof}

We next relate $q(2 \sqrt{nd})$ to $\GSA(\Sh)$. 
For a vector $v\in S^{d-1}$, and $d \times d$ symmetric matrices $A^{(1)},\ldots,A^{(n)}$, define the vector
  \begin{equation}\label{eqn:wv}
 W_v=\big(v^T A^{(1)} v,v^T A^{(2)} v,\ldots,v^T A^{(n)} v\big) \in \R^n \; .
 \end{equation}
Notice that $\Sh$ can be written as
\[
\Sh=
\Big\{x\in \R^n:\sum_i x_i A^{(i)}\preceq 2 \sqrt{nd}\cdot\id \Big\} = 
\Big\{x\in \R^n: \forall v \in S^{d-1},~ \langle x, W_v \rangle \leq 2 \sqrt{nd} \Big\} \; .
\]
We say that $A^{(1)},\ldots,A^{(n)}$ are \emph{good} if
\[
\forall v\in S^{d-1},~\frac{1}{2}\sqrt{n} \leq \norm{W_v} \leq 2\sqrt{n}\; .
\]
\begin{lemma}
\label{lem:technicallemma}
There exists a constant $C\geq 1$ such that
for all integers $n$ and $d\leq n/C$, 
 random matrices $\bA^{(1)},\ldots,\bA^{(n)}$ drawn i.i.d.\ from $\H_d$ are good with probability at least $1-\exp\br{-n/50}$. 
  \end{lemma}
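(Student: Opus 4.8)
The plan is to bound the two quantities $R:=\sup_{v\in S^{d-1}}\norm{W_v}$ and $r:=\inf_{v\in S^{d-1}}\norm{W_v}$ and show that, with the stated probability, $R\le 2\sqrt n$ and $r\ge\tfrac12\sqrt n$. The starting point is the pointwise behaviour: for a fixed unit vector $v$, a short computation from $\bA^{(i)}\sim\H_d$ shows each coordinate $v^T\bA^{(i)}v$ is $\G(0,2)$, and these are independent across $i$, so $\norm{W_v}^2$ has the law of $2\boldr^2$ with $\boldr\sim\chi_n$. By Lemma~\ref{lem:chi2}, choosing $x=\Theta(n)$ appropriately, $\norm{W_v}^2\in[\gamma n,\,\alpha^2 n]$ for suitable absolute constants $\tfrac14<\gamma$ and $\alpha^2<4$ (both taken a fixed amount inside the endpoints $\tfrac14$ and $4$), except with probability $e^{-c_0 n}$ for an absolute constant $c_0>1/50$. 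Next I would fix a small absolute constant $\eps$, take an $\eps$-net $\mathcal N$ of $S^{d-1}$ with $|\mathcal N|\le(3/\eps)^d$ (Fact~\ref{fac:epsnet}), and union bound: except with probability $(3/\eps)^d e^{-c_0 n}$, every $v_0\in\mathcal N$ satisfies $\norm{W_{v_0}}^2\in[\gamma n,\alpha^2 n]$.

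The crux is transferring this from $\mathcal N$ to all of $S^{d-1}$ without incurring the usual, and here fatal, Lipschitz loss. Given $v$, pick $v_0\in\mathcal N$ and set $w:=v-v_0$, so $\norm{w}\le\eps$; expanding,
\[
(W_v-W_{v_0})_i \;=\; v^T\bA^{(i)}v-v_0^T\bA^{(i)}v_0 \;=\; 2\,w^T\bA^{(i)}v_0 \;+\; w^T\bA^{(i)}w\,.
\]
The vector $(w^T\bA^{(i)}w)_i$ equals $\norm{w}^2\,W_{w/\norm{w}}$, hence has norm at most $\eps^2 R$. The vector $(2\,w^T\bA^{(i)}v_0)_i$ equals $2B_{v_0}w$, where $B_{v_0}\in\R^{n\times d}$ is the matrix whose $i$-th row is $(\bA^{(i)}v_0)^T$, hence it has norm at most $2\eps\norm{B_{v_0}}_{\mathrm{op}}$. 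Taking the supremum over $v$ yields the self-referential bound $R\le\max_{v_0\in\mathcal N}\norm{W_{v_0}}+2\eps\max_{v_0\in\mathcal N}\norm{B_{v_0}}_{\mathrm{op}}+\eps^2 R$, i.e.\ $R\le(1-\eps^2)^{-1}\big(\max_{v_0}\norm{W_{v_0}}+2\eps\max_{v_0}\norm{B_{v_0}}_{\mathrm{op}}\big)$, and the same estimate used in reverse gives $r\ge\min_{v_0}\norm{W_{v_0}}-2\eps\max_{v_0}\norm{B_{v_0}}_{\mathrm{op}}-\eps^2 R$. Everything now reduces to controlling $\max_{v_0\in\mathcal N}\norm{B_{v_0}}_{\mathrm{op}}$.

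For this I would note (again a direct GOE covariance computation) that for a fixed unit $v_0$ the vectors $\bA^{(i)}v_0$ are i.i.d.\ $\G(0,\id+v_0v_0^T)$, so $B_{v_0}$ has the law of $\boldsymbol{G}\,(\id+v_0v_0^T)^{1/2}$ with $\boldsymbol{G}$ a standard $n\times d$ Gaussian matrix, whence $\norm{B_{v_0}}_{\mathrm{op}}\le\sqrt2\,\norm{\boldsymbol{G}}_{\mathrm{op}}$. The standard Gaussian-matrix spectral-norm tail $\Pr[\norm{\boldsymbol{G}}_{\mathrm{op}}\ge\sqrt n+\sqrt d+s]\le e^{-s^2/2}$, combined with a union bound over $\mathcal N$ taking $s^2=2d\ln(3/\eps)+n/25+O(1)$ so that the factor $(3/\eps)^d$ cancels, gives that except with probability $O(e^{-n/50})$ one has $\max_{v_0\in\mathcal N}\norm{B_{v_0}}_{\mathrm{op}}\le\sqrt2(\sqrt n+\sqrt d+s)\le K\sqrt n$ for an absolute constant $K$ (here $d\le n$ makes $\sqrt d\le\sqrt n$ and $s=O(\sqrt n)$). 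Since $\eps\mapsto\eps\sqrt{\ln(3/\eps)}\to0$ as $\eps\to0$, one can then pick $\eps$ small enough that $2\eps K<\min\{\,2-\alpha,\ \sqrt\gamma-\tfrac12\,\}$, after which the displayed inequalities give $R\le2\sqrt n$ and $r\ge\tfrac12\sqrt n$; finally $C$ is chosen large enough that $d\le n/C$ forces $(3/\eps)^d e^{-c_0 n}\le\tfrac12 e^{-n/50}$ (this needs only $d\ln(3/\eps)\lesssim n$) and that the total failure probability is at most $e^{-n/50}$.

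I expect the genuine obstacle to be exactly this bookkeeping, and in particular seeing why the obvious route fails: the crude estimate $\big|\norm{W_v}-\norm{W_{v_0}}\big|\le2\norm{v-v_0}\,\big(\sum_i\norm{\bA^{(i)}}_{\mathrm{op}}^2\big)^{1/2}$ carries a Lipschitz constant of order $\sqrt{nd}$ (indeed $\sup_v\sum_i\norm{\bA^{(i)}v}^2=\lambda_{\max}(\sum_i(\bA^{(i)})^2)\approx nd$), which forces the net resolution $\eps\sim1/\sqrt d$ and hence $|\mathcal N|\sim d^{d/2}$; the union bound then collapses once $d$ is of order $n/\log n$ or larger, short of the claimed range $d\le n/C$. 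The decomposition above is precisely what trades the $\Theta(\sqrt{nd})$ Lipschitz constant for the harmless $\Theta(\sqrt n+\sqrt d)$ spectral norm of an $n\times d$ Gaussian matrix, at the price of an $\eps^2 R$ term absorbed on the left; with that in hand the net resolution $\eps$ may be kept a constant, so $|\mathcal N|=(3/\eps)^d$ costs only $O(d)$ in the exponent, which $d\le n/C$ with $C$ large comfortably absorbs.
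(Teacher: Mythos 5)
Your proof is correct, but it handles the key step---transferring the pointwise concentration of $\norm{\bW_v}$ from the $\eps$-net to the whole sphere---by a genuinely different mechanism than the paper. The paper observes that for nearby unit vectors $z,v$ the matrix $zz^T-vv^T$ is rank two and trace zero, so it diagonalizes as $\lambda(u_1u_1^T-u_2u_2^T)$ with $|\lambda|\le\norm{z-v}$; plugging this into $\bW_z-\bW_v$ gives the purely self-referential bound $\norm{\bW_z-\bW_v}\le \lambda\cdot 2\,\bn_{\max}$, which closes the loop $\bn_{\max}\le\norm{\bW_{\bv_{\max}}}+\tfrac{1}{50}\bn_{\max}$ with no further probabilistic input. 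You instead expand $v=v_0+w$ bilinearly, split $W_v-W_{v_0}$ into the cross term $2B_{v_0}w$ and the quadratic term $\norm{w}^2 W_{w/\norm w}$, absorb the latter as an $\eps^2 R$ self-reference, and control the former via the operator norm of the $n\times d$ Gaussian matrix $B_{v_0}$ (whose rows $\bA^{(i)}v_0$ you correctly identify as i.i.d.\ $\G(0,\id+v_0v_0^T)$, giving $\norm{B_{v_0}}_{\mathrm{op}}\le\sqrt2\,\norm{G}_{\mathrm{op}}$), union-bounded over the net. Both routes defeat the na\"{\i}ve Lipschitz constant of order $\sqrt{nd}$ that you correctly flag as the obstacle; the paper's is more self-contained since it reuses only the quantity being bounded, whereas yours imports the standard Davidson--Szarek tail for $\norm{G}_{\mathrm{op}}$ (not among the paper's stated preliminaries) and pays a small bookkeeping cost in choosing $\eps$ and then $C\gtrsim\log(1/\eps)$ in the right order, but is arguably the more ``off-the-shelf'' argument. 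As you note, your constants need care (the failure probability in Lemma~\ref{lem:chi2} gives $e^{-n/40}$ rather than $e^{-n}$, so $c_0>1/50$ only barely), but the structure is sound and yields the same conclusion with the same quality of bounds.
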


 \begin{proof}
 For a fixed $v\in S^{d-1}$, we claim that
\begin{align}
\label{eq:part1bound}
\Pr [{n\leq \norm{\bW_v}^2\leq3n}]\geq 1-2\exp\br{-n/40}. \; 
\end{align}
 To see this, observe that by definition of the Gaussian orthogonal ensemble, for $\bA\sim\H_d$ and unit vector $v\in\reals^d$, $v^T\bA v = \sum_{i,j} v_i v_j \bA_{i,j}$
 is distributed according to
 $$
 \br{4\sum_{i<j}v_i^2v_j^2+2\sum_iv_i^4}^{1/2}\cdot \G(0,1)=\sqrt{2}\cdot \G(0,1).
 $$
Therefore, each entry in $\bW_v$ is distributed according to $\G(0,2)$, and 
Lemma~\ref{lem:chi2} implies  Eq.~\eqref{eq:part1bound}. We next  prove that with high probability (over the $\bA^{(i)}$s), for \emph{every} unit vector $z$,  $\|\bW_z\|$ is large. First, by Fact~\ref{fac:epsnet}, there exists a set
$\V=\{v_1,\ldots,v_{10^{5d}}\}\subseteq \R^d$
of unit vectors that form a $10^{-4}$-net of the unit Euclidean sphere. Applying a union bound on $\V$, we have

 \begin{equation}\label{eqn:wvwv}
 \Pr [{\forall v\in\V:n\leq \norm{\bW_v}^2\leq 3n}]\geq 1-2\exp\br{-n/40}\cdot 10^{5d}\geq 1-\exp\br{-n/50} \; ,
 \end{equation}
here we used that $d\leq n/C$ for a sufficiently large $C$.

To conclude the proof, it suffices to show that if $\bA^{(1)},\ldots,\bA^{(n)}$ are such that
\[
\forall v\in\V, ~n \leq \norm{\bW_v}^2 \leq 3n \; ,
\]
then also
\[
\forall z\in S^{d-1},~\norm{ \bW_z } \geq\frac{1}{2}\sqrt{n} \; .
\]
Let $\bn_{\max}=\max_{z\in S^{d-1}}\norm{\bW_z}$ and $\bn_{\min}=\min_{z\in S^{d-1}}\norm{\bW_z}$. Let $\bz_{\max}$ and $\bz_{\min}$ be the vectors achieving the maximum and the minimum respectively. Let $\bv_{\max}$ and $\bv_{\min}$ be the vectors in $\V$ that are closest to $\bz_{\max}$ and $\bz_{\min}$, respectively.
For any vectors $z,v\in S^{d-1}$ with $\norm{z-v}\leq 10^{-4}$,  applying the spectral decomposition of $zz^T-vv^T$, there exist unit vectors $u_1,u_2$ and $0\leq\lambda\leq\frac{1}{100}$ such that
\begin{equation}\label{eqn:vz}
  zz^T-vv^T=\lambda\cdot \br{u_1u^T_1-u_2u_2^T} \; .
\end{equation}
Hence
\begin{align*}
 \norm{\bW_{z}-\bW_{v}}^2 =
  \sum_{i=1}^n\br{z^T\bA^{(i)}z-v^T \bA^{(i)}v}^2 &=\sum_{i=1}^n\br{\Tr \br{\bA^{(i)}\br{zz^T-vv^T}}}^2 \\
  &\leq\frac{1}{10^4}\sum_{i=1}^n\br{u_1^T \bA^{(i)}u_1-u_2^T\bA^{(i)}u_2}^2\\
  &\leq \frac{1}{5000}\sum_{i=1}^n\br{\br{u_1^T \bA^{(i)}u_1}^2+\br{u_2^T\bA^{(i)}u_2}^2}\\
  &\leq\frac{\bn_{\max}^2}{2500} \; .
\end{align*}
Choosing $z=\bz_{\max}$ and $v=\bv_{\max}$, we have
\[
\norm{\bW_{\bz_{\max}}}\leq\norm{\bW_{\bv_{\max}}}+\frac{\bn_{\max}}{50} \; .
\]
Now, since $\norm{\bW_{\bz_{\max}}}=\bb_{\max}$, we have
\[
\bn_{\max}
\leq
\frac{50}{49}\norm{\bW_{\bv_{\max}}}
\leq
\frac{50}{49} \sqrt{3n} \le 2 \sqrt{n}
\; .
\]
Similarly, we set $z=\bz_{\min}$ and $v=\bv_{\min}$ and obtain
\[
\bn_{\min}\geq\norm{\bW_{\bv_{\min}}}-\frac{\bn_{\max}}{50}
\geq
\sqrt{n} - \frac{1}{25} \sqrt{n} > \frac{1}{2} \sqrt{n} \; .
\]
This concludes the result.
\end{proof}

For the following claim, we define the inner and outer shells of $\Sh$ as
\begin{align*}
\mathcal{D}^{\mathrm{in}}_{\delta} 
&=
\Big\{x:\lambda_{\max}\Big(\sum_ix_iA^{(i)}\Big)\in\sqrt{n}\cdot \Br{2\sqrt{d}-\delta,2\sqrt{d}} \Big\} \; , \\
\mathcal{D}^{\mathrm{out}}_{\delta} 
&=
\Big\{x:\lambda_{\max}\Big(\sum_ix_iA^{(i)}\Big)\in\sqrt{n}\cdot \Br{2\sqrt{d},2\sqrt{d}+\delta} \Big\} \; .
\end{align*}
Also recall the inner and outer neighborhoods of $\Sh$, defined as
\begin{align*}
\Sh_{\delta}^{\mathrm{in}}
&=\set{x\in\Sh:\exists y\notin\Sh:\norm{x-y} \leq \delta} \; , \\
\Sh_{\delta}^{\mathrm{out}}
&=\set{x\notin\Sh:\exists y\in\Sh:\norm{x-y}\leq \delta} \; .
\end{align*}

\begin{claim}
\label{claim:DdeltaTdelta}
For sufficiently small $\delta>0$ and any good $A^{(1)},\ldots,A^{(n)}$, we have $\mathcal{D}^{\mathrm{in}}_{\delta}\subseteq\Sh_{4\delta}^{\mathrm{in}}$ and
$\Sh_{\delta}^{\mathrm{out}} \subseteq \mathcal{D}^{\mathrm{out}}_{2\delta}$.
\end{claim}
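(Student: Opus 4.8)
The plan is to transfer everything to the convex function $F(x)=\lambda_{\max}\br{\sum_i x_i A^{(i)}}=\sup_{v\in S^{d-1}}\inp{x}{W_v}$, writing $M(x):=\sum_ix_iA^{(i)}$, so that $\Sh=\{x:F(x)\leq 2\sqrt{nd}\}$, $\mathcal{D}^{\mathrm{in}}_\delta=\{x:F(x)\in[2\sqrt{nd}-\sqrt n\,\delta,\,2\sqrt{nd}]\}$ and $\mathcal{D}^{\mathrm{out}}_\delta=\{x:F(x)\in[2\sqrt{nd},\,2\sqrt{nd}+\sqrt n\,\delta]\}$, and then to extract from goodness a two‑sided control on the variation of $F$. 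From the upper bound $\norm{W_v}\leq 2\sqrt n$ in the goodness hypothesis one gets that $F$ is $2\sqrt n$‑Lipschitz, since $F(y)-F(x)\leq\sup_v\inp{y-x}{W_v}\leq 2\sqrt n\,\norm{y-x}$. For a matching lower bound on the \emph{growth} of $F$, fix $x$, let $v$ be a top unit eigenvector of $M(x)$ — so $\inp{x}{W_v}=v^TM(x)v=F(x)$, while $\inp{y}{W_v}\leq F(y)$ for all $y$ directly from the supremum representation — and set $u=W_v/\norm{W_v}$; then for all $t\geq 0$,
\[
F(x+tu)\ \geq\ \inp{x+tu}{W_v}\ =\ F(x)+t\norm{W_v}\ \geq\ F(x)+\tfrac12\sqrt n\,t,
\]
using $\norm{W_v}\geq\tfrac12\sqrt n$ from goodness.

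Given these two facts the inclusions are immediate. For $\Sh^{\mathrm{out}}_\delta\subseteq\mathcal{D}^{\mathrm{out}}_{2\delta}$: if $x\notin\Sh$ is within distance $\delta$ of some $y\in\Sh$, then $F(x)>2\sqrt{nd}$ and the Lipschitz bound gives $F(x)\leq F(y)+2\sqrt n\,\delta\leq 2\sqrt{nd}+2\sqrt n\,\delta$, so $F(x)\in[2\sqrt{nd},\,2\sqrt{nd}+2\sqrt n\,\delta]=\sqrt n\cdot[2\sqrt d,\,2\sqrt d+2\delta]$, i.e.\ $x\in\mathcal{D}^{\mathrm{out}}_{2\delta}$. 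For $\mathcal{D}^{\mathrm{in}}_\delta\subseteq\Sh^{\mathrm{in}}_{4\delta}$: if $F(x)\in[2\sqrt{nd}-\sqrt n\,\delta,\,2\sqrt{nd}]$ then $x\in\Sh$; taking $v,u$ as above and $y=x+3\delta\,u$, the growth bound gives $F(y)\geq F(x)+\tfrac32\sqrt n\,\delta\geq 2\sqrt{nd}+\tfrac12\sqrt n\,\delta>2\sqrt{nd}$, so $y\notin\Sh$, while $\norm{x-y}=3\delta\leq 4\delta$; hence $x\in\Sh^{\mathrm{in}}_{4\delta}$.

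The remaining points are routine book‑keeping: a top eigenvector of $M(x)$ attains the supremum defining $F(x)$ (so the chosen $W_v$ really satisfies $\inp{x}{W_v}=F(x)$), and under goodness $\Sh$ is a genuine convex body — it has nonempty interior since $F(0)=0<2\sqrt{nd}$, and its complement is nonempty since $F$ grows at least linearly along the ray above — so that the neighborhoods $\Sh^{\mathrm{in}}_\delta,\Sh^{\mathrm{out}}_\delta$ and the complement $\Sh^c$ behave as the definitions intend. There is no real obstacle here; in fact the argument goes through for every $\delta>0$, and the only thing one must get right is the matching of constants (the step size $3\delta$ sitting inside the $4\delta$‑neighborhood, and the factor $2$ in the outer shell) so that the slack between the $\tfrac12\sqrt n$ and $2\sqrt n$ bounds from goodness is absorbed cleanly.
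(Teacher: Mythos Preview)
Your proof is correct and follows essentially the same approach as the paper: both choose a top eigenvector $v$ of $M(x)$ and push $x$ along the direction of $W_v$ to exit $\Sh$, using the goodness bounds $\tfrac12\sqrt n\le\norm{W_v}\le 2\sqrt n$ to control the step size and the Lipschitz variation. The only cosmetic difference is that the paper fixes the increment $\inp{y}{W_v}=2\delta\sqrt n$ and bounds $\norm{y}\le 4\delta$, whereas you fix $\norm{y}=3\delta$ and bound the increment from below; your framing via the Lipschitz and linear-growth properties of $F$ is a clean repackaging of the same computation.
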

\begin{proof}
 For every $x\in \mathcal{D}^{\mathrm{in}}_{\delta}$, let $v$ be a unit eigenvector of $\sum_i x_iA^{(i)}$ with the eigenvalue $\lambda_{\max}(\sum_i x_iA^{\br{i}})$. 
Therefore, 
\[
\langle x, W_v \rangle =
v^T \Big(\sum x_i A^{(i)}\Big) v \geq(2\sqrt{d}-\delta)\sqrt{n} \; .
\]          
Setting $y=2\delta\sqrt{n}W_v / \norm{W_v}^2$, we have
  \begin{align*}
\langle x+y, W_v  \rangle     
     &= \langle x,W_v \rangle + 2\delta\sqrt{n} \geq
      \br{2\sqrt{d}-\delta}\sqrt{n}+2\delta\sqrt{n} =\br{2\sqrt{d}+\delta}\sqrt{n}
      \; ,
  \end{align*}
 and so $x+y\notin\Sh$. 
 Moreover, since $A^{(1)},\ldots,A^{(n)}$ are good, 
 $
 \|y\| = 2\delta\sqrt{n}/\norm{W_v} \leq 4\delta
 $
 and therefore $x \in \Sh_{4\delta}^{\mathrm{in}}$, as desired. 
 For the other containment, let $x\in \Sh_{\delta}^{\mathrm{out}}$.
 Then for any unit vector $v$, by Cauchy-Schwarz and using $\|W_v\| \le 2 \sqrt{n}$,
\[
\langle x, W_v  \rangle \le 2 \sqrt{nd} + 2 \delta \sqrt{n} \; ,
\]
implying that $x \in \mathcal{D}^{\mathrm{out}}_{2\delta}$, as desired. 
 \end{proof}

We now prove our main theorem. 

\begin{proof}[Proof of Theorem~\ref{thm:gsa}]
By Lemmas~\ref{lem:main} and~\ref{lem:technicallemma}, 
if $\bA^{(1)},\ldots,\bA^{(n)}$ are i.i.d.~drawn from the $d\times d$ Gaussian orthogonal ensemble, then
with probability at least $1-C \exp(-d n^{-3/4} / C)$,
we have that $q(2 \sqrt{nd})\geq~c\cdot\sqrt{1/d}$ (where $q(\cdot)$ is as defined in Lemma~\ref{lem:main}) and
that $\bA^{(1)},\ldots,\bA^{(n)}$ are good, 
where $c,C>0$ are some constants. 
Since $q(\cdot)$ is continuous, the former 
implies that $\G^n(\De^{\mathrm{in}}_{\delta})\geq c\delta \sqrt{n/(2d)}$ 
for sufficiently small $\delta>0$. Thus,
$\G^n(\Sh_{4\delta}^{\mathrm{in}}) \ge c\delta \sqrt{n/(2d)}$  
by Claim~\ref{claim:DdeltaTdelta}. By definition of  $\GSA(S)=\lim_{\delta\rightarrow 0}\G^n(S^{\mathrm{in}}_{\delta})/\delta$, we obtain the desired lower bound on $\GSA(\Sh)$. 
Similarly, by Lemmas~\ref{lem:main} and~\ref{lem:technicallemma}, 
if $\bA^{(1)},\ldots,\bA^{(n)}$ are i.i.d.~drawn from the $d\times d$ Gaussian orthogonal ensemble, then
with probability at least $1-\exp\br{-n/50}$,
$\G^n(\De^{\mathrm{out}}_{\delta}) \le \delta \sqrt{n}/(\sqrt{\pi d})$
for sufficiently small $\delta>0$. Thus,
$\G^n(\Sh^\text{out}_{\delta/2}) \le \delta \sqrt{n}/(\sqrt{\pi d})$
by Claim~\ref{claim:DdeltaTdelta}. We complete the proof using  $\GSA(S)=\lim_{\delta\rightarrow 0}{\G^n(S^{\text{out}}_{\delta}})/\delta$.
\end{proof}

 \paragraph{Acknowledgements.} We thank Daniel Kane, Assaf Naor, Fedor Nazarov, and Yiming Zhao for useful correspondence. O.R. is supported by the Simons Collaboration on Algorithms and Geometry, a Simons Investigator Award, and by the National Science Foundation (NSF) under Grant No.~CCF-1814524. P.Y. is supported by the National Key R\&D Program of China 2018YFB1003202, National Natural Science Foundation of China (Grant No. 61972191), the Program for Innovative Talents and Entrepreneur in Jiangsu and Anhui Initiative
in Quantum Information Technologies Grant No.~AHY150100.

\end{document}